\newtheorem{theorem}{Theorem}[section]
\newtheorem{lemma}[theorem]{Lemma}
\theoremstyle{definition}
\newtheorem{definition}[theorem]{Definition}
\newtheorem{example}[theorem]{Example}
\newtheorem{corollary}[theorem]{Corollary}
\newtheorem{proposition}[theorem]{Proposition}
\theoremstyle{remark}
\numberwithin{equation}{section}
\DeclareMathOperator{\PSL}{PSL}
\DeclareMathOperator{\SL}{SL}
\DeclareMathOperator{\CNS}{CNS}
\title{Separable boundaries for non-hyperbolic groups}
\begin{document}

\author{Jacopo Bassi}
\address{
Department of Mathematics, University of Tor Vergata, Via della Ricerca Scientifica 1, 00133 Roma, Italy}
\curraddr{}
\email{bssjcp01@uniroma2.it}
\thanks{}

\author{Florin R\u adulescu*}
\address{Department of Mathematics, University of Tor Vergata, Via della Ricerca Scientifica 1, 00133 Roma, Italy}
\curraddr{}
\email{radulesc@mat.uniroma2.it}
\thanks{\\
*Florin R\u adulescu is a member of the Institute of Mathematics of the Romanian Academy}

\subjclass[2010]{Primary 46L05}
\keywords{Boundaries, nuclear embeddings}

 
\begin{abstract} 
We exhibit examples of separable boundaries for non-hyperbolic groups. The main ingredient is the alignment property introduced by Furman in the study of rigidity properties of discrete subgroups of algebraic groups.
\end{abstract}

 
\maketitle
\bibliographystyle{amsplain}

\section*{Introduction}
Ozawa proved in his seminal paper \cite{ozawa} the existence of a canonical separable nuclear $C^*$-algebra containing the reduced $C^*$-algebra of the free group on $n\geq 2$ generators and contained in its injective envelope, giving an explicit, tight example of a deep theorem by Kirchberg. This algebra is the crossed product associated to the action of the group on its boundary. In his paper he conjectured the existence of such an intermediate nuclear $C^*$-algebra for every exact $C^*$-algebra. Kalantar and Kennedy proved in their outstanding work \cite{kk} that this conjecture holds true for the reduced $C^*$-algebra of any discrete exact group, by showing that the reduced crossed product associated to the action of the group on its Furstenberg boundary validates Ozawa's conjecture. The $C^*$-algebras produced in this way are not separable in the case of non-amenable groups and this fact is viewed there as a consequence of the vast generality to which the result applies, including non-hyperbolic groups. As already observed in \cite{kk} Remark 4.8, the proof of the "non-separable" Ozawa conjecture implies the "separable" Ozawa conjecture at the price that the $C^*$-algebra so obtained is no longer canonical. In the following we are interested in exhibiting a canonical nuclear \textit{separable} $C^*$-algebra $\CNS ( \Gamma)$ such that $C^*_r \Gamma \subset \CNS ( \Gamma) \subset I(C^*_r \Gamma)$ for a class of non-hyperbolic groups.\\
The proof by Ozawa relies on the hyperbolicity of the free group and the existence of a quasi-invariant doubly ergodic measure on its boundary, ingredients which allow a full description of the possible equivariant Borel maps from the boundary to the probability measures on it: there is only one. Moving to the non-hyperbolic setting, we shall consider an analogue of this boundary for certain non-hyperbolic groups, the key ingredient being the concept of alignment system introduced by Furman. Combining the approach by Ozawa and Furman's breakthroughs we show that canonical separable nuclear intermediate $C^*$-algebras exist for certain non-hyperbolic groups. Furman's techniques also allow to give examples of Kirchberg algebras arising in this way.\\
The examples considered here are probably known to experts. The authors think they provide interesting applications of the connection between Ozawa's techniques and Furman's work.

\section{Alignment systems and second countable boundaries} 

The alignment property was introduced by Furman in his inspiring work \cite{furman} as a fundamental tool in order to classify measurable centralizers, quotients and joinings of certain discrete subgroups of locally compact second countable groups and to obtain rigidity results for groups acting on spaces of horospheres, by means of dynamical methods. Following his techniques and the approach developed by Ozawa in \cite{ozawa}, we shall see in the present section that suitable alignment systems can be used to prove the existence of intermediate canonical nuclear separable $C^*$-algebras. We recall the main notion:

\begin{definition}[\cite{furman} Definition 3.1]
Let $\Gamma$ be a group, $(X,m)$ be a measure space with a measure class preserving $\Gamma$-action, $B$ a topological space with a continuous $\Gamma$-action and $\pi : X \rightarrow B$ be a measurable $\Gamma$-equivariant map. We shall say that $\pi$ has the \textit{alignment property} with respect to the $\Gamma$-actions if $x \mapsto \delta_{\pi(x)}$ is the only $\Gamma$-equivariant measurable map from $(X,m)$ to the space $\mathcal{P}(B)$ of all regular Borel probability measures on $B$. In such a situation $(\pi : (X,m) \rightarrow B; \Gamma)$ is an \textit{alignment system}.
\end{definition}

The first part of the following is shown in the proof of Proposition 3 of \cite{ozawa}.
\begin{lemma}
\label{lem1}
Let $\Gamma$ be a discrete countable group. Let $Y$ be a compact metric $\Gamma$-space and $X$ a locally compact $\Gamma$-space with a full-supported quasi-invariant Borel measure $m$. There is a bijection
\begin{equation*}
\{\Gamma -  \mbox{ ucp maps }: C(Y) \rightarrow L^\infty (X,m)\} \leftrightarrow \{ \Gamma -  \mbox{ meas. maps }: X \rightarrow \mathcal{P}(Y) \}.
\end{equation*}
\end{lemma}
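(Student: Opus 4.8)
The plan is to write the two correspondences down explicitly and to check that they are mutually inverse; everything is elementary except for one measurable-selection step, which is where separability of $C(Y)$ (equivalently, metrizability of $Y$) and countability of $\Gamma$ enter essentially.

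\emph{From measures to maps} (the direction attributed to \cite{ozawa}). Given a $\Gamma$-equivariant measurable $\beta\colon X\to\mathcal P(Y)$, write $\beta_x$ for its value at $x$ and set $\Phi_\beta(f)(x)=\int_Y f\,d\beta_x$ for $f\in C(Y)$. Since the Borel structure of $\mathcal P(Y)$ is generated by the evaluations $\mu\mapsto\int f\,d\mu$, the function $x\mapsto\Phi_\beta(f)(x)$ is Borel, and it is bounded by $\|f\|_\infty$, so it represents a class in $L^\infty(X,m)$; the resulting map $\Phi_\beta$ is visibly unital and positive, hence ucp because its domain is abelian, and $\Gamma$-equivariance of $\beta$ immediately gives $\Gamma$-equivariance of $\Phi_\beta$.

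\emph{From maps to measures.} Conversely let $\phi\colon C(Y)\to L^\infty(X,m)$ be a $\Gamma$-equivariant ucp map. Fix a countable $(\mathbb Q+i\mathbb Q)$-linear subspace $\mathcal D\subset C(Y)$ that contains the constants and is dense in $C(Y)$, and for each $f\in\mathcal D$ pick a bounded Borel representative of $\phi(f)$. Each of the (countably many) conditions $|\phi(f)(x)|\le\|f\|_\infty$, $\phi(\lambda f+g)(x)=\lambda\phi(f)(x)+\phi(g)(x)$ (for $\lambda\in\mathbb Q+i\mathbb Q$ and $f,g\in\mathcal D$), and $\phi(1)(x)=1$ holds for $m$-almost every $x$, hence all hold simultaneously on a conull Borel set $X_0$. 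For $x\in X_0$ the functional $f\mapsto\phi(f)(x)$ is a norm-$\le1$ unital $(\mathbb Q+i\mathbb Q)$-linear functional on the dense subspace $\mathcal D$, so it extends uniquely to a bounded unital functional of norm one on $C(Y)$, which is automatically a state, hence is integration against a unique $\beta_x\in\mathcal P(Y)$ by the Riesz representation theorem; set $\beta_x=\delta_{y_0}$ for $x\notin X_0$. Then $x\mapsto\int f\,d\beta_x=\phi(f)(x)$ is Borel for $f\in\mathcal D$, hence for every $f\in C(Y)$ by uniform approximation, so $\beta\colon X\to\mathcal P(Y)$ is measurable; and intersecting, over the countably many $\gamma\in\Gamma$, the conull sets on which the equivariance identity for $\phi$ is witnessed yields $\beta_{\gamma x}=\gamma_*\beta_x$ for a.e.\ $x$, so $\beta$ is $\Gamma$-equivariant.

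\emph{Mutual inverseness, and the main obstacle.} If $\beta$ is built from $\phi$ as above, then $\Phi_\beta(f)=\phi(f)$ in $L^\infty(X,m)$ for $f\in\mathcal D$, hence for all $f\in C(Y)$ by continuity, so $\Phi_\beta=\phi$. Conversely, if $\beta'$ is built from $\Phi_\beta$, then $\int f\,d\beta'_x=\int f\,d\beta_x$ for a.e.\ $x$ for each $f\in\mathcal D$; intersecting over $\mathcal D$ and using that a Borel probability measure on the metric space $Y$ is determined by its integrals against a dense subset of $C(Y)$, we get $\beta'_x=\beta_x$ for a.e.\ $x$. Neither construction depends on the choice of representatives, so the correspondence descends to the asserted bijection. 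I expect the one genuinely delicate point to be the assembly in the previous paragraph — turning the pointwise values $\phi(f)(x)$ into an honest measurable field of states on $C(Y)$ — and it is precisely here that $C(Y)$ being separable and $\Gamma$ being countable are indispensable, since without them the relevant ``almost everywhere'' statements cannot be intersected over the index set.
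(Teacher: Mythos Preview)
Your proof is correct and follows essentially the same approach as the paper: both directions are constructed exactly as you describe, and the key step in both treatments is to choose Borel representatives of $\phi(f)$ for $f$ ranging over a countable dense subset of $C(Y)$ and then intersect the (countably many) conull sets on which the relevant linearity, boundedness, unitality, and equivariance identities hold, yielding honest pointwise states $\beta_x$. Your write-up is in fact somewhat cleaner than the paper's --- the paper carries out an explicit inductive enlargement $S_1\subset S_2\subset\cdots$ of the generating set with a detailed verification that each ``bad'' set $N_{\{f_{i_1},\dots,f_{i_k}\}}$ is null, whereas you simply intersect all the conull sets at once --- and you also supply the mutual-inverseness check that the paper omits.
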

\begin{proof}
Let $\phi : C(Y) \rightarrow L^\infty (X,m)$ be a $\Gamma$-equivariant ucp map. Let $\mathcal{C} \subset C(Y)$ be a $\Gamma$-invariant countably generated dense $*$-subalgebra of $C(Y)$. We want to lift $\phi$ to a $\Gamma$-equivariant linear positive continuous map $\tilde{\phi}$ from $\mathcal{C}$ with values in bounded measurable functions on $X$. For let $F=\{f_n\}_{n \in \mathbb{N}}$ be the countable set obtained by forming adjoints and products of generators for $\mathcal{C}$ and let $S_1 = \{ 1\}$; let $\tilde{\phi} (1) = 1$. Extend $\tilde{\phi}$ linearly to the linear span of $S_1$. Suppose now we are given a finite set $S_n \subset F$ and a continuous linear positive map $\tilde{\phi}$ from the linear span of $S_n$ to measurable functions on $X$ such that $\tilde{\phi}(1)=1$ modulo a null set. Let $f$ be an element of $F$ that is not in $S_n$. Let $\tilde{\phi}(f) = \sum_{i=1}^k  \alpha_i\tilde{\phi}(f_i)$ if $f=\sum_{i=1}^k \alpha_i f_i$ is a linear span of elements from $S_n$. Otherwise choose a measurable representative $\tilde{\phi}(f)$ such that $\|\tilde{\phi}(f)\|=\|\phi (f)\|$ and consider the linear extension of the map $\tilde{\phi}$ to the linear span of $S_{n+1} =S_n \cup \{f\}$ . For every finite set $\{f_{i_1},...,f_{i_k}\}$ of elements from $S_{n+1}$ let $N_{\{f_{i_1},...,f_{i_k}\}}=\{ x \in X \; | \; \exists (q_1,...,q_k) \in \mathbb{C}^k\; \mbox{ satisfying } \; \Re (q_i) \in \mathbb{Q}, \; \Im (q_i) \in \mathbb{Q} \; \forall 1 \leq i \leq k, \; |\sum_{j=1}^k q_j \tilde{\phi}(f_{i_j}) (x) | > \| \phi(\sum_{j=1}^k q_j f_{i_j})\|\}$. Being the countable union of null sets, $N_{\{f_{i_1},...,f_{i_k}\}}$ has measure zero. Suppose now that $x \in X$ is such that there are $\beta_1,..., \beta_k$ complex numbers satisfying  $|\sum_{j=1}^k \beta_j \tilde{\phi}(f_{i_j}) (x) | > \| \phi(\sum_{j=1}^k \beta_j f_{i_j})\|$; by continuity of the maps $(\beta_1,...,\beta_k) \mapsto |\sum_{j=1}^k \beta_j \tilde{\phi}(f_{i_j}) (x) |$ and $(\beta_1,...,\beta_k) \mapsto  \| \phi(\sum_{j=1}^k \beta_j f_{i_j})\|$ there is $(q_1,..., q_k) \in \mathbb{\mathbb{C}}^k$ with $\Re(q_i) \in \mathbb{Q}$, $\Im (q_i) \in \mathbb{Q}$ for every $1 \leq i \leq k$ such that $|\sum_{j=1}^k q_j \tilde{\phi}(f_{i_j}) (x) | > \| \phi(\sum_{j=1}^k q_j f_{i_j})\|$ and so $x$ belongs to $N_{\{f_{i_1},...,f_{i_k}\}}$. Hence $\{ x \in X \; | \; \exists \beta_1,..., \beta_k \in \mathbb{C} \; | \; |\sum_{j=1}^k \beta_j \tilde{\phi}(f_{i_j}) (x) | > \| \phi(\sum_{j=1}^k \beta_j f_{i_j})\} =N_{\{f_{i_1},...,f_{i_k}\}}$. Therefore, there is a set of full measure $E \subset X$ ($E= \cap_{\{f_{i_1},...,f_{i_k}\}\subset S_{n+1}} N_{\{f_{i_1},...,f_{i_k}\}}^c$) with the property that the linear map $\tilde{\phi} \cdot \chi_{E}$ is continuous on the linear span of $S_{n+1}$. By induction we find a continuous positive linear map $\tilde{\phi}$ from $\mathcal{C}$ taking values in measurable functions on $X$. Note now that for every $f \in F$ there is at most a null set $N' \subset X$ such that $\gamma^{-1} \tilde{\phi}(\gamma f) (x) \neq \tilde{\phi}(f)(x)$ for some $\gamma \in \Gamma$ and $x \in N'$. Again we can find a $\Gamma$-invariant set of full measure $E'$ such that the map $f \mapsto \tilde{\phi}(f) \cdot \chi_{E'}$ is continuous, linear, positive and $\Gamma$-equivariant. For every $x \in X$ extend the resulting functional $\Phi_x : f \mapsto \tilde{\phi} (f) (x)$ to the whole $C(Y)$ and let this extension be denoted by $\Phi_x$. We want to show that the resulting map $X \mapsto \mathcal{P}(Y)$, $x \mapsto \Phi_x$ is measurable.\\
Since $Y$ is a compact metric space, $\mathcal{P}(Y)$ is second countable and since $\mathcal{C}$ is dense in $C(Y)$, a basis of open sets is given by
\begin{equation*}
B_{\epsilon , \{f_i\}_{i=1}^n , \mu} := \{ \nu \in \mathcal{P}(Y) \; | \; | \nu (f_i) - \mu (f_i ) | < \epsilon \; \forall i=1,...,n\},
\end{equation*}
with $\epsilon >0$, $\{f_i\}_{i=1}^n \subset \mathcal{C}$, $\mu \in \mathcal{P}(Y)$. Since every open set is a countable union of sets of this form, it is enough to see that the inverse image under $\Phi$ of each such set is measurable in $X$. We have
\begin{equation*}
\begin{split}
\Phi^{-1} (B_{\epsilon, \{f_i\}_{i=1}^n , \mu} \cap \Phi (X))&= \{ x \in X \; | \; |\Phi_x (f_i) - \mu(f_i) | < \epsilon \; \forall i=1,...,n\}\\
&=\{ x \in X \; | \; |\tilde{(\phi (f_i))} (x) - \mu (f_i) | < \epsilon \; \forall i=1,...,n\}
\end{split}
\end{equation*}
for every choice of $\epsilon >0$, $\{f_i\}_{i=1}^n \subset \mathcal{C}$, $\mu \in \mathcal{P}(Y)$. But the function $x \mapsto |\tilde{(\phi (f_i))} (x) - \mu (f_i)|$ is measurable and one direction follows.

Let now $\Phi : X \rightarrow \mathcal{P}(Y)$ be a measurable $\Gamma$-equivariant map defined on the Borel set of full measure $E \subset X$; for $x \in E$ let
\begin{equation*}
\phi (f)(x):= \int f d \Phi_x.
\end{equation*}
The resulting map
\begin{equation*}
\phi : C(Y) \rightarrow L^\infty (X,m)
\end{equation*}
is unital, positive and contractive, hence it is ucp because of commutativity of $C(Y)$. \end{proof}

An action of a topological group $\Gamma$ on a compact space $X$ is \textit{strongly proximal} if the closure of the orbit of every element in $\mathcal{P}(X)$ contains a point mass. A minimal compact strongly proximal $\Gamma$-space is a \textit{$\Gamma$-boundary} (\cite{glasner_top} Definitions). The proof of the following is an adaptation of the proof of \cite{ozawa} Theorem 1. 

\begin{theorem}
\label{thm2}
Let $\Gamma$ be a discrete countable group. Let $Y$ be a compact metric $\Gamma$-space and $X$ a locally compact $\Gamma$-space with a full-supported quasi-invariant Borel measure $m$, $(\pi : (X,m) \rightarrow Y; \Gamma)$ an alignment system with $\pi$ a continuous surjection and suppose the action of $\Gamma$ on $X$ is amenable. There are natural $C^*$-algebra inclusions
\begin{equation*}
C^*_r \Gamma \subseteq C(Y) \rtimes_r \Gamma \subseteq I(C^*_r \Gamma).
\end{equation*}
In particular, $Y$ is a $\Gamma$-boundary and if the action of $\Gamma$ on $Y$ is amenable, then we can take $\CNS (\Gamma) = C(Y) \rtimes \Gamma$.
\end{theorem}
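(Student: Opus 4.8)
The plan is to realise all the algebras concretely inside one injective von Neumann algebra and then to use Lemma~\ref{lem1} together with the alignment property to squeeze $C(Y)\rtimes_r\Gamma$ into the injective envelope. Let $N$ be the group--measure-space von Neumann algebra of $\Gamma\curvearrowright(X,m)$, acting on $L^2(X,m)\otimes\ell^2(\Gamma)$, with canonical unitaries $(u_\gamma)_{\gamma\in\Gamma}$ and canonical normal faithful conditional expectation $E\colon N\to L^\infty(X,m)$. Since $\Gamma\curvearrowright X$ is (topologically) amenable, the measured action $\Gamma\curvearrowright(X,m)$ is amenable, so $N$ is hyperfinite, hence injective (also as an operator system). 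The map $\lambda_\gamma\mapsto u_\gamma$ is a faithful embedding $C^*_r\Gamma\hookrightarrow N$ (a vector state on $N$ restricts to the canonical trace on $\cc[\Gamma]$). Because $\pi$ is a continuous surjection and $m$ has full support, $\iota\colon f\mapsto f\circ\pi$ is a unital, injective, $\Gamma$-equivariant $*$-homomorphism $C(Y)\to L^\infty(X,m)\subseteq N$, and under the bijection of Lemma~\ref{lem1} it is the map $x\mapsto\delta_{\pi(x)}$. Hence the $C^*$-subalgebra $\mathcal A\subseteq N$ generated by $\iota(C(Y))$ and $\{u_\gamma\}_\gamma$ is the image of the regular representation of $C(Y)\rtimes_r\Gamma$ induced from the faithful representation $\iota$, so $\mathcal A\cong C(Y)\rtimes_r\Gamma$ and $C^*_r\Gamma\subseteq\mathcal A$; this gives the first inclusion.

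For the second inclusion, equip $N$ with the $\Gamma$-action $\beta_\gamma=\mathrm{Ad}(u_\gamma)$; it restricts to the given action on $L^\infty(X,m)$ and to the inner action on $C^*_r\Gamma$, and $E$ is $\Gamma$-equivariant. By Hamana's equivariant injective-envelope theory, $I(C^*_r\Gamma)$ carries a canonical $\Gamma$-action extending the inner action on $C^*_r\Gamma$ and is $\Gamma$-injective; since $N$ is an injective $\Gamma$-$C^*$-algebra containing $C^*_r\Gamma$, there is a $\Gamma$-equivariant ucp idempotent $\Phi\colon N\to N$ fixing $C^*_r\Gamma$ whose range $\mathcal J$ is a $\Gamma$-invariant $C^*$-subalgebra $*$-isomorphic to $I(C^*_r\Gamma)$; in particular $C^*_r\Gamma\subseteq\mathcal J\cong I(C^*_r\Gamma)$. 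It suffices to show $\iota(f)\in\mathcal J$ for all $f\in C(Y)$, i.e.\ $\Phi(\iota(f))=\iota(f)$ (then $\mathcal A\subseteq\mathcal J$, since $\{u_\gamma\}\subseteq C^*_r\Gamma\subseteq\mathcal J$ and $\mathcal J$ is a $C^*$-algebra). Now $E\circ\Phi\circ\iota\colon C(Y)\to L^\infty(X,m)$ is a $\Gamma$-equivariant ucp map, hence by Lemma~\ref{lem1} corresponds to a $\Gamma$-equivariant measurable map $X\to\mathcal P(Y)$, which by the alignment property must be $x\mapsto\delta_{\pi(x)}$; thus $E\circ\Phi\circ\iota=\iota$. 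Fix $f\in C(Y)$, put $a=\iota(f)$ and $b=\Phi(a)\in\mathcal J$. Then $E(b)=a$ and $E(\Phi(a^*a))=\iota(f^*f)=a^*a$. Applying the positive map $E$ to the Kadison--Schwarz inequality $\Phi(a^*a)\ge b^*b$ gives $a^*a\ge E(b^*b)$, while Kadison--Schwarz for the conditional expectation $E$ gives $E(b^*b)\ge E(b)^*E(b)=a^*a$; hence $E(b^*b)=E(b)^*E(b)$. Since $E$ is a faithful conditional expectation, $E\big((b-E(b))^*(b-E(b))\big)=E(b^*b)-E(b)^*E(b)=0$ forces $b=E(b)=a$, so $\iota(f)=b\in\mathcal J$. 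This proves $C^*_r\Gamma\subseteq C(Y)\rtimes_r\Gamma\subseteq I(C^*_r\Gamma)$.

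For the remaining assertions: let $M\subseteq\mathcal P(Y)$ be a minimal closed $\Gamma$-invariant set. Amenability of $\Gamma\curvearrowright(X,m)$ provides a $\Gamma$-equivariant measurable map $X\to\mathcal P(M)$; composing with the $\Gamma$-equivariant barycenter map $\mathcal P(M)\to\mathcal P(Y)$ yields a $\Gamma$-equivariant measurable map $X\to\mathcal P(Y)$, which by alignment is $x\mapsto\delta_{\pi(x)}$. Since $\delta_{\pi(x)}$ is extreme in $\mathcal P(Y)$, the $\mathcal P(M)$-valued map must be $x\mapsto\delta_{\delta_{\pi(x)}}$, so $\delta_{\pi(x)}\in M$ for a.e.\ $x$; full support of $m$ together with continuity and surjectivity of $\pi$ then force $\{\delta_y:y\in Y\}\subseteq M$, whence $M=\{\delta_y:y\in Y\}$ by minimality. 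So $\mathcal P(Y)$ has the point masses as its unique minimal set, which is exactly minimality and strong proximality of $Y$; thus $Y$ is a $\Gamma$-boundary. Finally, if $\Gamma\curvearrowright Y$ is amenable, then $C(Y)\rtimes_r\Gamma=C(Y)\rtimes\Gamma$ is nuclear, and it is separable because $Y$ is metrizable and $\Gamma$ countable, so one may take $\CNS(\Gamma)=C(Y)\rtimes\Gamma$.

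The main obstacle is the passage from $E\circ\Phi\circ\iota=\iota$ (all that Lemma~\ref{lem1} and alignment directly yield) to the pointwise identity $\Phi\circ\iota=\iota$: the averaged identity is upgraded to the pointwise one by the Kadison--Schwarz squeeze above, whose punchline is that the multiplicative domain of the faithful conditional expectation $E$ is exactly $L^\infty(X,m)$. A secondary point requiring care is the correct invocation of Hamana's equivariant injective-envelope machinery to position a copy of $I(C^*_r\Gamma)$ inside $N$ together with a $\Gamma$-equivariant conditional expectation onto it.
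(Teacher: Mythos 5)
Your proof is correct and, for the two inclusions, follows essentially the same route as the paper: realize everything inside the injective algebra $L^\infty(X,m)\rtimes\Gamma$, use injectivity to place a copy of $I(C^*_r\Gamma)$ there as the range of a ucp idempotent fixing $C^*_r\Gamma$, and then use Lemma~\ref{lem1} plus the alignment property plus faithfulness of the canonical conditional expectation $E$ to force that idempotent to fix $\iota(C(Y))$. (The paper phrases this as: any ucp $\theta$ agreeing with the canonical isomorphism on $C^*_r\Gamma$ must equal $\pi^*\rtimes_r\Gamma$; your Kadison--Schwarz squeeze is exactly the content of the paper's one-line ``faithfulness of $E$ implies $\theta|_{C(Y)}=\pi^*$''.) Two remarks. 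First, a point to tighten: the range $\mathcal J$ of $\Phi$ is a $C^*$-algebra only for the Choi--Effros product $a\star b=\Phi(ab)$, not a priori a subalgebra of $N$, so ``$\mathcal J$ is a $C^*$-algebra'' does not by itself give $\mathcal A\subseteq\mathcal J$ nor that the inclusion is multiplicative. You need the multiplicative-domain observation: since $\Phi$ fixes both $\iota(f)$ and $\iota(f^*f)=\iota(f)^*\iota(f)$ (and fixes the unitaries $u_\gamma$), all generators of $\mathcal A$ lie in the multiplicative domain of $\Phi$, so $\Phi$ is a $*$-homomorphism on $\mathcal A$ fixing its generators, hence fixes $\mathcal A$, and $\star$ restricts to the ambient product there. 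You have already verified exactly the hypotheses needed, so this is a one-line fix; the paper addresses the same point when it identifies the Choi--Effros product with the canonical one. Second, your argument that $Y$ is a $\Gamma$-boundary is genuinely different from the paper's: the paper cites Hamana's Theorem 3.4 and Kalantar--Kennedy Remark 5.6, whereas you give a direct dynamical proof (amenability yields an equivariant measurable map $X\to\mathcal P(M)$ for any minimal set $M\subseteq\mathcal P(Y)$; barycenters, alignment, extremality of point masses, and full support force $M=\{\delta_y\}$). This self-contained route is a nice byproduct of having the alignment property available and avoids the injective-envelope characterization of boundaries entirely.
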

\begin{proof}
Both $C(Y) \rtimes_r \Gamma$ and $L^\infty (X,m) \rtimes_r \Gamma$ contain a copy of $C^*_r \Gamma$, say $C^*_r \Gamma_{(1)}$ and $C^*_r \Gamma_{(2)}$; let $\alpha: C^*_r \Gamma_{(1)} \rightarrow C^*_r \Gamma_{(2)}$ be the corresponding $*$-isomorphism. When restricted to $C_c (\Gamma , C(Y))$, this takes the form $\alpha (\sum_{\gamma \in \Gamma} \lambda^{(1)}_\gamma) = \sum_{\gamma \in \Gamma} \lambda^{(2)}_\gamma$, where $\lambda^{(1)}_\gamma$ is the corresponding unitary on $L^2 (\Gamma) \otimes H$ and $\lambda^{(2)}_\gamma$ the one on $L^2 (\Gamma) \otimes L^2 (X,m)$, where $H$ is a Hilbert space on which $C(Y)$ is represented faithfully. Let $\theta : C(Y)\rtimes_r \Gamma \rightarrow L^\infty (X,m) \rtimes \Gamma$ be a ucp map such that $\theta |_{C^*_r \Gamma_{(1)}} = \alpha$ and consider the composition $\phi := E \circ \theta |_{C(Y)} : C(Y) \rightarrow L^\infty (X,m)$, where $E : L^\infty (X,m) \rtimes \Gamma \rightarrow L^\infty (X,m)$ is the conditional expectation induced by the projection $p_e : L^2 (X,m) \otimes L^2 (\Gamma) \rightarrow L^2 (X,m) \otimes \mathbb{C} e$ ($e$ is the identity in $\Gamma$). Since $\theta |_{C^*_r \Gamma_{(1)}} = \alpha$, it follows that for every $y \in C(Y)\rtimes_r \Gamma$ and $x \in C^*_r \Gamma_{(1)}$ we have $\theta (xy)=\alpha (x) \theta (y)$ and $\theta(yx)=\theta (y) \alpha(x)$. Hence, for every $\gamma \in \Gamma$ and $f \in C(Y)$,
\begin{equation*}
\begin{split}
\phi (\gamma \cdot f) &= E(\theta (\lambda^{(1)}_\gamma f \lambda^{(1) \; *}_\gamma))\\
		&=E(\lambda^{(2)}_\gamma \theta (f) \lambda^{(2) \; *}_\gamma) = \gamma \cdot E(\theta (f)) = \gamma \cdot \phi (f),
\end{split}
\end{equation*}
where we used the fact that $E(\lambda^{(2)}_\gamma x \lambda^{(2) \; *}_\gamma)=\gamma \cdot E(x)$ for every $x \in L^\infty (X,m) \rtimes \Gamma$. Hence $\phi : C(Y) \rightarrow L^\infty (X,m)$ is a $\Gamma$-equivariant ucp map. By the alignment property and Lemma \ref{lem1} it follows that $\phi= \pi^*$. The faithfulness of $E$ implies that $\theta |_{C(Y)} = \pi^*$. Hence $\theta$ and $\pi^* \rtimes_r \Gamma$ both send elements in $C_c (\Gamma, C(Y))$ of the form $\sum_{\gamma \in \Gamma} f_\gamma \lambda^{(1)}_\gamma$ to $\sum_{\gamma \in \Gamma} \pi^* (f_\gamma) \lambda^{(2)}_\gamma$; then they coincide on $C(Y) \rtimes_r \Gamma$.\\
Since the action of $\Gamma$ on $X$ is amenable, $L^\infty (X,m) \rtimes \Gamma$ is injective and so there is a completely isometric copy of $I(C^*_r \Gamma)$, say $\phi (I(C^*_r \Gamma))$, contained in $L^\infty (X,m) \rtimes \Gamma$; moreover $\phi (I(C^*_r \Gamma))$ is the image of a projection $\eta : L^\infty (X,m) \rtimes \Gamma \rightarrow L^\infty (X,m) \rtimes \Gamma$, which is the identity on $C^*_r \Gamma_{(2)}$. Composing with the $*$-homomorphism $\pi^* \rtimes_r \Gamma : C(Y) \rtimes_r \Gamma \rightarrow L^\infty (X,m) \rtimes \Gamma$, we obtain a ucp map that, when restricted to $C^*_r \Gamma_{(1)}$, coincides with $\alpha$; by the previous argument, it coincides with $\pi^* \rtimes_r \Gamma$. In particular, the Choi-Effros product ($\star$) associated to $\eta$, is just the canonical product on $\pi^* \rtimes_r \Gamma (C(Y) \rtimes_r \Gamma)$ and since $\phi (I(C^*_r \Gamma))$ is the image of $\eta$, this gives the inclusions $\pi^* \rtimes_r \Gamma (C(Y) \rtimes_r \Gamma) \subset (\phi (C^*_r \Gamma), \star) \subset L^\infty (X,m) \rtimes \Gamma$. Note that $(\phi (C^*_r \Gamma), \star) \simeq I(C^*_r \Gamma)$ as $C^*$-algebras. The statement that $Y$ is a $\Gamma$-boundary follows from Theorem 3.4 of \cite{hamana}, arguing as in \cite{kk} Remark 5.6. \end{proof}

\begin{example}
The following example is based on the proof of \cite{furman} Theorem 4.8. Let $k=\mathbb{R}$ or $\mathbb{Q}_p$, with $p$ prime and $\Gamma < \SL_n (k)$ a lattice. Let $Q$ be the subgroup of upper triangular matrices, $Q= \{ (a_{i,j}) \; | \; i >j \Rightarrow a_{i,j} =0, \; 1 \leq i,j \leq n\}$ and $H$ the subgroup of unitriangular matrices, $H= \{ (a_{i,j}) \in Q \; | \; a_{i,i} =1, \; 1 \leq i \leq n\}$. Let also, for $1 \leq i<j \leq n$, $H_{i,j} = \{ 1 + t e_{i,j}, \; t \in k\}$, where $e_{i,j}$ is the matrix with $1$ on the $(i,j)$-entry and $0$ elsewhere. For every $1 \leq i<j \leq n$, the action of $H_{i,j}$ on $\SL_n (k)/Q$ is algebraic and since $H_{i,j} \simeq k$, it follows that $G/Q$ decomposes as the disjoint union of the fixed points $F_{i,j}$ for $H_{i,j}$ and the free orbits $V_{i,j}$. By Moore's Theorem (which follows from the Howe-Moore property proved in \cite{ciobotaru} and \cite{bekka} Theorem III.2.1) the action of $H_{i,j}$ on $G/\Gamma$ is ergodic and hence the same holds for the action of $\Gamma$ on $G/H_{i,j}$. The action of $H_{i,j}$ on $V_{i,j}$ is proper and by ergodicity the orbit of almost every point in every measurable set $E$ of positive measure intersects $E$, i.e. the action is conservative. Hence \cite{furman} Theorem 4.4 applies and $SL_n (k) /Q$ is a $\Gamma$-boundary. $Q$ is solvable, hence amenable and $\CNS (\Gamma) = C (\SL_n (k)/Q) \rtimes \Gamma$.
\end{example}

\begin{example}
\label{exmp2}
Let $S$ be a finite set of places and $\Gamma < \prod_{i\in S} \SL_2 (\overline{\mathbb{Q}}^i)$ a lattice with the property that for every $i\in S$, the image of $\Gamma$ under the projection on $\prod_{j\neq i} \SL_2 (\overline{\mathbb{Q}}^i)$ is dense in $\prod_{j\neq i} \SL_2 (\overline{\mathbb{Q}}^i)$. For every $i\in S$ denote $Q_i = \{ (a_{j,l}) \in \SL_2 (\overline{\mathbb{Q}}^i) \; | \; a_{2,1} =0\}$, $H_i = \{ (a_{j,l}) \in Q_i \; | \; a_{1,1}=a_{2,2}=1 \}$. Then every $H_i$ acts properly on $V_i = (\prod_{j\neq i} \SL_2 (\overline{\mathbb{Q}}^j)/Q_j) \times (\SL_2 (\overline{\mathbb{Q}}^i)/Q_i - \{ eQ_i\})$ and $G/Q - \{eQ\} = \bigcup_{i=0}^n V_i$. It follows from the denseness hypothesis and Moore Theorem that $\Gamma$ acts ergodically on $G/H_i$ for every $i$. It follows again from \cite{furman} Theorem 4.4 that $\prod_{i=0}^n \SL_2 (\overline{\mathbb{Q}}^i) /Q_i$ is a $\Gamma$-boundary. $\prod_{i=0}^n Q_i$ is amenable and so $\CNS (\Gamma) = C(\prod_{i=0}^n \SL_2 (\overline{\mathbb{Q}}^i)/Q_i) \rtimes \Gamma$.
\end{example}

\begin{corollary}
\label{cor1}
Let $S$ denote a finite set of places and let $\Gamma < \prod_{i \in S} \PSL_2 (\overline{\mathbb{Q}}^i)$ be a lattice with the property that, if $|S| >1$, then for every $i \in S$, the image of $\Gamma$ under the projection on $\prod_{j\neq i} \PSL_2 (\overline{\mathbb{Q}}^i)$ is dense in $\prod_{j\neq i} \PSL_2 (\overline{\mathbb{Q}}^i)$. Then $\Gamma$ is $C^*$-simple.
\end{corollary}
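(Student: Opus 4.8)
The plan is to deduce $C^*$-simplicity of $\Gamma$ from the theorem of Kalantar and Kennedy that a discrete group is $C^*$-simple as soon as it acts topologically freely on a $\Gamma$-boundary, applied to the boundary furnished by (the proof of) Example~\ref{exmp2}. Thus the argument would proceed in three steps: produce the relevant $\Gamma$-boundary $Y$, verify that $\Gamma$ acts on $Y$ topologically freely, and invoke \cite{kk}. The genuine point will be the middle step, and specifically the case of group elements that are trivial in some of the coordinates; everything else is a combination of bookkeeping with Example~\ref{exmp2}, \cite{furman} Theorem 4.4, and \cite{kk}.

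\emph{The boundary.} Put $X_i:=\PSL_2(\overline{\mathbb{Q}}^i)/Q_i$ and $Y:=\prod_{i\in S}X_i$. Since the centre of $\SL_2(\overline{\mathbb{Q}}^i)$ is contained in $Q_i$, the spaces $\SL_2(\overline{\mathbb{Q}}^i)/Q_i$ and $\PSL_2(\overline{\mathbb{Q}}^i)/Q_i$ agree as $\PSL_2(\overline{\mathbb{Q}}^i)$-spaces, the unipotent subgroups $H_i$ and their (proper) actions on the pieces $V_i$ are unchanged, and the ergodicity input (Moore's theorem via Howe--Moore) is insensitive to the finite centre; hence the argument of Example~\ref{exmp2} carries over verbatim and shows that $Y$ is a $\Gamma$-boundary. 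Equivalently, one may pass to the preimage $\tilde\Gamma$ of $\Gamma$ under the finite-kernel covering $\prod_i\SL_2(\overline{\mathbb{Q}}^i)\to\prod_i\PSL_2(\overline{\mathbb{Q}}^i)$, which is again a lattice with the same denseness property and acts on $Y$ through $\Gamma$, and apply Example~\ref{exmp2} directly. When $|S|=1$ the denseness hypothesis is vacuous and the single-factor case of the same argument is used.

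\emph{Topological freeness.} Identify $X_i$ with the projective line $\mathbb{P}^1(\overline{\mathbb{Q}}^i)$: it is a compact metrizable space with no isolated points (the field $\overline{\mathbb{Q}}^i$ being infinite and non-discrete), and $\PSL_2(\overline{\mathbb{Q}}^i)$ acts on it faithfully by Möbius transformations, any one of which, if distinct from the identity, has at most two fixed points (the fixed locus is the zero set of a non-zero quadratic, or a single point in the parabolic case). Given $e\neq g=(g_i)_{i\in S}\in\Gamma\subset\prod_i\PSL_2(\overline{\mathbb{Q}}^i)$, choose $i_0$ with $g_{i_0}\neq e$; then the fixed-point set $\mathrm{Fix}_Y(g)=\prod_i\mathrm{Fix}_{X_i}(g_i)$ is contained in $F\times\prod_{i\neq i_0}X_i$ for some finite $F\subset X_{i_0}$, and this closed set has empty interior in $Y$ because every non-empty open subset of the perfect space $X_{i_0}$ is infinite. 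As $\Gamma$ is countable, $\bigcup_{g\neq e}\mathrm{Fix}_Y(g)$ is a countable union of closed nowhere dense sets, hence meagre, so $\Gamma\curvearrowright Y$ is topologically free. This is the step that really uses that each factor $X_i$ is perfect and that $Y$ is a genuine product of projective lines rather than a single $\mathbb{P}^1$.

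\emph{Conclusion.} Let $\partial_F\Gamma$ be the universal $\Gamma$-boundary and $\varphi\colon\partial_F\Gamma\to Y$ the canonical $\Gamma$-equivariant surjection. Were $\Gamma\curvearrowright\partial_F\Gamma$ not topologically free, some $g\neq e$ would fix a non-empty open set $U\subset\partial_F\Gamma$; covering the minimal space $\partial_F\Gamma$ by finitely many $\Gamma$-translates $\gamma_1 U,\dots,\gamma_n U$ and applying $\varphi$ would exhibit $Y=\bigcup_{j=1}^n\mathrm{Fix}_Y(\gamma_j g\gamma_j^{-1})$ with each $\gamma_j g\gamma_j^{-1}\neq e$, contradicting the preceding step since $Y$ is a Baire space. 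Hence $\Gamma$ acts topologically freely on $\partial_F\Gamma$, and $C^*_r\Gamma$ is simple by \cite{kk}.
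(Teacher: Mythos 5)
Your proof is correct and follows essentially the same route as the paper: the boundary $\prod_{i\in S}\PSL_2(\overline{\mathbb{Q}}^i)/Q_i$ from Example~\ref{exmp2}, topological freeness from the fact that a non-trivial M\"obius transformation fixes at most two points of the projective line, and then \cite{kk} Theorem 6.2. You merely spell out details the paper leaves implicit (the $\SL_2$ versus $\PSL_2$ adjustment, elements with some trivial coordinates when $|S|>1$, and the passage to the Furstenberg boundary).
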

\begin{proof} 
Any non-trivial element in $\PSL(2,\mathbb{R})$ has at most two fixed points for its action on $\partial \mathbb{H}$ (and in this case it is hyperbolic). Let now $p$ be a prime number. Since the space $\PSL(2,\mathbb{Q}_p)/Q$ ($Q$ is the group of upper-triangular matrices) is isomorphic (as a $\PSL(2,\mathbb{Q}_p)$-space) to the projective space $P^1 (\mathbb{Q}_p)$, also the non-trivial elements of $\PSL(2,\mathbb{Q}_p)$ have at most two fixed points for their action on $\PSL(2,\mathbb{Q}_p)/Q$. The result follows from Example \ref{exmp2} and \cite{kk} Theorem 6.2. \end{proof}

\section{Kirchberg algebras associated to boundary actions} 

Let $\Gamma$ be a topological group and $X$ a $\Gamma$-space. The action of $\Gamma$ on $X$ is \textit{extremely proximal} if for every closed set $C \subsetneq X$ and every non-empty open set $U \subset X$ there is $\gamma \in \Gamma$ such that $\gamma C \subset U$ (\cite{glasner_top} Definitions). Since singleton sets are closed in Hausdorff spaces, it follows that extremely proximal actions are minimal. Infinite compact $\Gamma$-spaces associated to extremely proximal actions are boundaries (\cite{glasner_top} Theorem 2.3) and, in virtue of a fundamental result by Laca and Spielberg, under the assumption of topological freeness, the corresponding reduced crossed product $C^*$-algebras are purely infinite and simple (\cite{boundary} Theorem 5). Thanks to Jolissaint's deep insights, we now know that the same structure is shared by the crossed products associated to the more general class of topologically free actions satisfying the $n$-filling property (\cite{n-filling} Definition 0.1, Theorem 1.2 and Remark 1.3; see also the statement of Proposition \ref{prop3}). This fact allows us to show that some of the intermediate crossed products constructed in the previous section are Kirchberg algebras.

The following is based on the considerations given in the proof of \cite{furman} Theorem 4.4.
\begin{theorem}
\label{thm2}
Let $\Gamma$ be a discrete subgroup of a locally compact second countable group $G$. Suppose there are closed subgroups $H< Q < G$ with $Q$ cocompact such that $H$ leaves $G/Q - \{eQ\}$ invariant and acts properly on it. If the action of $\Gamma$ on $G/H$ is conservative then the action of $\Gamma$ on $G/Q$ is extremely proximal. In particular, if this action is topologically free, then $C(G/Q) \rtimes_r \Gamma$ is a simple purely infinite unital $C^*$-algebra; if it is also amenable, then $\CNS(\Gamma)=C(G/Q) \rtimes \Gamma$ is a Kirchberg algebra, hence it is classifiable by $K$-theoretic data.
\end{theorem}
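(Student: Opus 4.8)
The plan is to prove first the central dynamical claim---that $\Gamma$ acts extremely proximally on $G/Q$, by adapting Furman's arguments from the proof of \cite{furman} Theorem~4.4---and then to read off the $C^*$-algebraic consequences from the facts recalled earlier in this section. Write $X=G/Q$ and $x_0=eQ$. Since $Q$ is cocompact and $G$ is second countable, $X$ is compact metrizable and $\Gamma$ is countable; the invariance hypothesis on $H$ forces $H\le\operatorname{Stab}_G(x_0)=Q$. Let $m$ be the canonical $G$-quasi-invariant (full support) measure class on $G/H$ and $\pi\colon G/H\to X$ the natural equivariant continuous open surjection. I would begin by recording two reductions forced by conservativity: $H$ must be noncompact (otherwise $G$, hence $\Gamma$, acts properly on $G/H$, and a proper action of an infinite countable discrete group on a second countable locally compact space admits a wandering set of positive $m$-measure, which is incompatible with conservativity), and therefore $X\setminus\{x_0\}$ is noncompact (a group acting properly on a compact space is compact), so $x_0$---hence, by homogeneity of $X$ under $G$, every point of $X$---is non-isolated and $X$ is infinite.

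The proof of extreme proximality then breaks into three pieces. \emph{(i) Contraction towards $x_0$, from properness.} For every compact $K\subseteq X\setminus\{x_0\}$ and every neighbourhood $U$ of $x_0$, the set $\{h\in H:hK\cap(X\setminus U)\neq\emptyset\}$ is relatively compact by properness (as $X\setminus U$ is compact inside $X\setminus\{x_0\}$); since $H$ is noncompact there is $h$ with $hK\subseteq U$, and more generally any sequence in $H$ leaving every compact subset of $H$ eventually maps $K$ into $U$. \emph{(ii) Transporting this to $\Gamma$ via conservativity.} Applying conservativity to the members of a countable basis of $G/H$ yields an $m$-conull set $S_0$ such that for every $gH\in S_0$ and every $\varepsilon>0$ there are infinitely many $\gamma\in\Gamma$ with $d(\gamma gH,gH)<\varepsilon$. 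Fix such a $gH$ and a sequence $(\gamma_k)$ of distinct such elements with $\gamma_kgH\to gH$; then for each neighbourhood $W$ of $e$ in $G$ we have $g^{-1}\gamma_kg\in WH$ eventually, so we may write $g^{-1}\gamma_kg=w_kh_k$ with $w_k\to e$ in $G$ and $h_k\in H$. Here discreteness of $\Gamma$ is decisive: if the $h_k$ remained in a compact subset of $H$, the elements $\gamma_k=gw_kh_kg^{-1}$ would lie in a fixed relatively compact subset of $G$, impossible for infinitely many distinct elements of a discrete group; hence $(h_k)$ leaves every compact subset of $H$. Combining with (i) and with continuity of the action at $(e,x_0)$ (using $w_k\to e$), one obtains: with $y:=gx_0$, for every compact $K\subseteq X\setminus\{y\}$ and every neighbourhood $V$ of $y$ one has $\gamma_kK=g\,w_kh_k(g^{-1}K)\subseteq V$ for all large $k$. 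Pushing $S_0$ forward, $D:=\pi(S_0)$ is $\pi_\ast m$-conull, hence dense in $X$, and every $y\in D$ carries such a contracting sequence in $\Gamma$. \emph{(iii) Assembly.} Given a closed $C\subsetneq X$ and a nonempty open $U$, choose \emph{distinct} points $z\in(X\setminus C)\cap D$ and $y\in U\cap D$ (possible since $D$ is dense and $X$ has no isolated points) and a neighbourhood $N$ of $z$ with $\overline{N}$ compact and $y\notin\overline{N}$; the contracting sequence at $z$ carries the compact set $C$ into $N$, the contracting sequence at $y$ then carries $\overline{N}$ into $U$, and composing the two elements of $\Gamma$ produces $\gamma\in\Gamma$ with $\gamma C\subseteq U$. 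Hence $\Gamma\curvearrowright G/Q$ is extremely proximal (in particular minimal), and, $G/Q$ being infinite, it is a $\Gamma$-boundary by \cite{glasner_top} Theorem~2.3. I expect step (ii)---turning the measure-theoretic recurrence supplied by conservativity into a dense family of contracting sequences in $\Gamma$, with discreteness of $\Gamma$ as the conversion mechanism---to be the main obstacle; (i), (iii) and the reductions are routine.

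For the $C^*$-algebraic statements: $C(G/Q)\rtimes_r\Gamma$ is unital because $G/Q$ is compact. If the action is topologically free then, being a topologically free extremely proximal (hence boundary) action, Laca and Spielberg's theorem (\cite{boundary} Theorem~5; alternatively the $n$-filling route of \cite{n-filling}) gives that $C(G/Q)\rtimes_r\Gamma$ is simple and purely infinite. If moreover the action is amenable, then the full and reduced crossed products agree, $C(G/Q)\rtimes\Gamma$ is nuclear (an amenable action of a discrete group on a commutative $C^*$-algebra) and separable ($\Gamma$ is countable and $G/Q$ is metrizable), so it is a separable nuclear simple purely infinite $C^*$-algebra, i.e.\ a Kirchberg algebra; amenability of the action also makes $\Gamma\ltimes(G/Q)$ an amenable groupoid, so $C(G/Q)\rtimes\Gamma$ satisfies the UCT (by Tu's theorem), whence the Kirchberg--Phillips classification determines it up to isomorphism by its $K$-theory together with the class of the unit in $K_0$. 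Finally, the proof of \cite{furman} Theorem~4.4 also exhibits $\pi\colon(G/H,m)\to G/Q$ as having the alignment property, so $(\pi\colon(G/H,m)\to G/Q;\Gamma)$ is an alignment system with $\pi$ a continuous surjection; applying the first theorem of Section~1 to it---its standing hypothesis that $\Gamma$ act amenably on $G/H$ being satisfied here, e.g.\ whenever $H$ is amenable as in the examples of that section---yields $C^*_r\Gamma\subseteq C(G/Q)\rtimes_r\Gamma\subseteq I(C^*_r\Gamma)$, and since $\Gamma\curvearrowright G/Q$ is amenable we may take $\CNS(\Gamma)=C(G/Q)\rtimes\Gamma$.
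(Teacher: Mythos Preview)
Your proposal is correct and follows essentially the same route as the paper: both use conservativity of $\Gamma$ on $G/H$ (via a section/recurrence argument à la Furman, Lemma~4.5) to produce elements $\gamma_n\in\Gamma$ of the form $(\text{something}\to e)\cdot g\,h_n\,g^{-1}$ with $h_n\to\infty$ in $H$, convert this through properness of $H$ on $G/Q\setminus\{eQ\}$ into a contraction statement, assemble extreme proximality, and then invoke \cite{boundary}, \cite{tu}, \cite{phillips_k}. The only cosmetic difference is that the paper packages the dynamical step as a displacement property ($\gamma K\cap K=\emptyset$ for every proper compact $K$) and deduces extreme proximality by a short case split, whereas you go straight to a dense set of attracting points and compose two contractions; you are also more explicit than the paper about $H$ being noncompact, $G/Q$ having no isolated points, and the justification of the $\CNS$ identification.
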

\begin{proof}
Denote by $\pi : G/H \rightarrow G/Q$ the canonical quotient map and by $g: G/H \rightarrow G$ a Borel cross section of the quotient map $G \rightarrow G/H$. Let $K \subsetneq G/Q$ be a compact proper subset and let $C$ be a larger compact set satisfying $K \subset C^\circ \subset C \subsetneq G/Q$. It follows from Lusin Theorem and conservativity of the action of $\Gamma$ on $G/H$, arguing as in the proof of \cite{furman} Lemma 4.5, that there are $x \in \pi^{-1} (G/Q- C)$, $\gamma_n \rightarrow \infty$ in $\Gamma$ and $t_n \rightarrow \infty$ in $H$ satisfying
\begin{equation*}
\gamma_n x \rightarrow x, \qquad  g_{\gamma_n x} g_x^{-1} =: u_n \rightarrow e \in G, \qquad \gamma_n = u_n g_x t_n g_x^{-1}.
\end{equation*}
Since $g_x Q$ does not belong to $C$, $g_x^{-1} C$ is a compact subset of $G/Q -\{eQ\}$. Since $H$ acts properly on $G/Q - \{eQ\}$, there is $n_1 \in \mathbb{N}$ such that $t_n g_x^{-1} C \cap g_x^{-1} C = \emptyset$ for every $n > n_1$. Since $u_n \rightarrow e \in G$, there is $n_2 \in \mathbb{N}$ such that $u^{-1}_n K \subset C$ for every $n > n_2$. Hence, for $n > \max \{n_1,n_2\}$ we have $t_n g_x^{-1} K \cap g_x^{-1} u_n^{-1} K \subset t_n g_x^{-1} C \cap g_x^{-1} C =\emptyset$, which entails $\gamma_n K \cap K = \emptyset$. Let now $U \subset G/Q$ be open and $K \subsetneq G/Q$ be closed. If $K$ contains $U$ there are elements $\gamma_1$, $\gamma_2$ in $\Gamma$ such that $\gamma_1 K \subset G/Q - K$ and $\gamma_2 (G/Q - U) \subset U$ and so $\gamma_2 \gamma_1 K \subset U$; otherwise there is $\gamma \in \Gamma$ such that $\gamma (G/Q - (U - K)) \subset U$. It follows that the action of $\Gamma$ on $G/Q$ is extremely proximal. Hence $C(G/Q) \rtimes_r \Gamma$ is purely infinite and simple by \cite{boundary} Theorem 5. If the action is amenable, then $C(G/Q) \rtimes \Gamma$ is a Kirchberg algebra and it is classifiable in virtue of \cite{tu} Lemma 3.5, Theorem 4.2.4 and \cite{phillips_k} Theorem 4.2.4. \end{proof}

\begin{example}[cfr. \cite{furman} Corollary 4.6]
\label{exmp3}
Let $k$ be either $\mathbb{R}$ or $\mathbb{Q}_p$ for $p$ prime and $\Gamma < \PSL_2 (k)$ a lattice. As usual, we let $Q = \{ (a_{i,j}) \in \SL_2 (k) \; | \; a_{2,1} =0\}$ and $H= \{ (a_{i,j}) \in Q \; | \; a_{1,1}=a_{2,2} =1\}$. Then $H \simeq k$ acts by translation on $G/Q - \{eQ\} \simeq k$, hence properly. $Q$ is amenable and $\CNS (\Gamma) = C(G/Q) \rtimes \Gamma$ is a Kirchberg algebra.
\end{example}

\begin{proposition}
\label{prop3}
Let $\Gamma$ be a dense subgroup of a locally compact second countable group $G$, $X$ be an infinite compact Hausdorff space on which $G$ acts and $n \in \mathbb{N}$. Suppose the action of $G$ on $X$ is $n$-filling. Then for every $n \in \mathbb{N}$ and non-empty open sets $U_1,..., U_n$ in $X$, there are elements $\gamma_1,..., \gamma_n \in \Gamma$ such that $\bigcup_{i=1}^n \gamma_i U_i = X$, i.e. the action of $\Gamma$ on $X$ is $n$-filling.
\end{proposition}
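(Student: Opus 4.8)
To prove Proposition~\ref{prop3}, the plan is to lift the $n$-filling property from $G$ to the dense subgroup $\Gamma$ by approximating, inside $G$, the group elements that witness a filling, and by creating enough ``room'' for the approximation using normality of $X$ together with continuity of the action.

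First I would fix non-empty open sets $U_1,\dots,U_n\subset X$. Since the action of $G$ on $X$ is $n$-filling, there are $g_1,\dots,g_n\in G$ with $\bigcup_{i=1}^n g_iU_i=X$. The sets $g_iU_i$ form a finite open cover of the compact Hausdorff, hence normal, space $X$; by the shrinking lemma for finite open covers of a normal space there are open sets $W_1,\dots,W_n$ with $\overline{W_i}\subset g_iU_i$ for each $i$ and $\bigcup_{i=1}^n W_i=X$. In particular $\bigcup_{i=1}^n\overline{W_i}=X$ as well, and each $\overline{W_i}$ is compact, being closed in $X$.

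The heart of the argument is the next step: for each $i$ I would produce an open neighbourhood $N_i$ of $g_i^{-1}$ in $G$ such that $h\overline{W_i}\subset U_i$ for every $h\in N_i$. Indeed, $g_i^{-1}\overline{W_i}\subset U_i$, so by joint continuity of the action $G\times X\to X$, for each $x\in\overline{W_i}$ there are open neighbourhoods $N_{i,x}\ni g_i^{-1}$ and $V_{i,x}\ni x$ with $N_{i,x}V_{i,x}\subset U_i$; compactness of $\overline{W_i}$ lets us cover it by finitely many $V_{i,x_1},\dots,V_{i,x_m}$, and $N_i:=\bigcap_{j=1}^m N_{i,x_j}$ does the job. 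Equivalently, for any $\gamma_i$ with $\gamma_i^{-1}\in N_i$ --- that is, for $\gamma_i$ in the open neighbourhood $N_i^{-1}$ of $g_i$ --- we get $\gamma_i^{-1}\overline{W_i}\subset U_i$, i.e. $\overline{W_i}\subset\gamma_iU_i$.

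Finally, using density of $\Gamma$ in $G$, I would choose $\gamma_i\in\Gamma\cap N_i^{-1}$ for each $i$, which is possible since $N_i^{-1}$ is a non-empty open subset of $G$. Then $\overline{W_i}\subset\gamma_iU_i$ for every $i$, so
\[
X=\bigcup_{i=1}^n W_i\subseteq\bigcup_{i=1}^n\overline{W_i}\subseteq\bigcup_{i=1}^n\gamma_iU_i\subseteq X,
\]
whence $\bigcup_{i=1}^n\gamma_iU_i=X$, which is exactly the assertion that the action of $\Gamma$ on $X$ is $n$-filling. The only delicate point is the uniform-neighbourhood step, and it is handled precisely by combining the shrinking lemma (which replaces the covering data $g_iU_i$ by compact sets $\overline{W_i}$ lying strictly inside) with compactness of $\overline{W_i}$ and continuity of the action; nothing else is needed.
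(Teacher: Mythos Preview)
Your proof is correct. The paper's argument proceeds a little differently: rather than invoking the shrinking lemma to produce compact sets $\overline{W_i}\subset g_iU_i$ and then approximating all the $g_i$ at once, it replaces the $g_i$ one at a time by induction. At step $k$, having already found $\gamma_1,\dots,\gamma_k\in\Gamma$ with $(\bigcup_{i\le k}\gamma_iU_i)\cup(\bigcup_{i>k}g_iU_i)=X$, one uses compactness of $U_{k+1}^c$ and continuity of the action to find a neighbourhood $W$ of $e$ with $(\bigcap_{i\le k}\gamma_iU_i^c)\cap g_{k+1}WU_{k+1}^c\cap(\bigcap_{i>k+1}g_iU_i^c)=\emptyset$, and then picks $\gamma_{k+1}\in\Gamma\cap g_{k+1}W$ by density. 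Both arguments rest on exactly the same ingredients---joint continuity of the action, compactness of closed subsets of $X$, and density of $\Gamma$---but your use of the shrinking lemma packages the compactness step once and for all, which lets you avoid the induction and makes the structure a bit more transparent.
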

\begin{proof}
Let $U_1,..., U_n$ be non-empty open sets and $g_1,...,g_n \in G$ such that $\bigcup_{i=1}^n g_i U_i = X$. There is a neighborhood of the identity $V \subset G$ such that $g_1 V U_1^c \cap (\bigcap_{i=2}^n g_i U_i^c) = \emptyset$. Hence there is $\gamma_1 \in \Gamma$ such that $\gamma_1 U_1 \cup \bigcup_{i=2}^n g_i U_i = X$. Let $k \in \{1,...,n-1\}$ and $\gamma_1,..., \gamma_k \in \Gamma$ be given such that $(\bigcup_{i=1}^k \gamma_i U_i) \cup (\bigcup_{i >k} g_i U_i) =X$. Let $U_{n+1} := \emptyset$. Let $W \subset G$ be a neighborhood of the identity such that $(\bigcap_{i=1}^k \gamma_i U_i^c) \cap (g_{k+1} W U_{k+1}^c) \cap (\bigcap_{i>k+1} g_i U_i^c) = \emptyset$. Then there is $\gamma_{k+1} \in \Gamma$ such that $(\bigcap_{i=1}^{k+1} \gamma_i U_i^c) \cap (\bigcap_{i>k+1} g_i U_i^c)=\emptyset$. Then $\bigcup_{i=1}^n \gamma_i U_i = X$. \end{proof}

Since $2$-filling actions are exactly the extremely proximal ones, it follows from Proposition \ref{prop3} that the lattices $\Gamma < \prod_{i\in S} \SL_2 (\overline{\mathbb{Q}}^i)$ considered in Example \ref{exmp2} admit extremely proximal topologically free actions on the compact spaces $\SL_2 (\overline{\mathbb{Q}}^i) / Q_i$ for every $i\in S$ (the fact that these actions are topologically free was observed in the proof of Corollary \ref{cor1}), but the associated crossed product algebras will not be nuclear. Indeed suppose there is $i \in S$ such that the crossed product $C(\SL_2 (\overline{\mathbb{Q}}^i)/Q_i) \rtimes \Gamma$ is nuclear, then the action of $\Gamma$ on $\SL_2 (\overline{\mathbb{Q}}^i)/Q_i$ would be amenable (\cite{delaroche2} Theorem 4.5). In particular, for every quasi-invariant finite measure $\mu$ on $\SL_2 (\overline{\mathbb{Q}}^i)/Q_i$ there would be a $\Gamma$-equivariant norm $1$ projection $p : l^\infty (\Gamma, L^\infty (\SL_2 (\overline{\mathbb{Q}}^i)/Q_i, \mu)) \rightarrow L^\infty (\SL_2 (\overline{\mathbb{Q}}^i)/Q_i, \mu)$ (\cite{delaroche2} Lemma 4.3) and it would follow from the ergodicity of the action of $\Gamma$ on $\SL_2 (\overline{\mathbb{Q}}^i)/Q_i$ and \cite{kuhn} that the Koopman representation associated to such a quasi-invariant measure is weakly contained in the left regular representation; but then there should be a non-zero vector $\xi$ in $L^2(\SL_2 (\overline{\mathbb{Q}}^i)/Q_i, \mu)$ whose associated matrix coefficient is in $L^{2+\epsilon}(\Gamma)$ for every $\epsilon >0$ (\cite{cowling-haagerup} Theorem 2). This is impossible since there is a sequence $\gamma_n$ in $\Gamma$ converging to the identity in $\SL_2 (\overline{\mathbb{Q}}^i)$. On the other hand, if $\Gamma$ is a lattice in $\prod_{i\in S} \PSL_2 (\overline{\mathbb{Q}}^i)$ and its action on $\prod_{i=0}^n \SL_2 (\overline{\mathbb{Q}}^i)/  Q_i$ is locally contractive (by which we mean that there is a group element with an attracting fixed point in the sense of \cite{n-filling} Definition 2.3) , it follows again from Example \ref{exmp2} (which entails minimality) and \cite{n-filling} Proposition 2.5 that $\CNS (\Gamma) = C(\prod_{i\in S} \SL_2 (\overline{\mathbb{Q}}^i) /  Q_i) \rtimes \Gamma$ is a Kirchberg algebra.

\begin{proposition}
\label{prop4}
Let $S$ denote a finite set of places and let $\Gamma < \prod_{i \in S} \SL_2 (\overline{\mathbb{Q}}^i)$ be a subgroup containing a diagonal copy of the group of diagonal matrices in $\SL_2$ with entries in $\mathbb{Z}[1/\prod_{i \in S\backslash \infty} p_i]$ ($\infty$ is the Archimedean place: $\overline{\mathbb{Q}}^\infty = \mathbb{R}$). Then the action of $\Gamma$ on $\prod_{i \in S} \SL_2 (\overline{\mathbb{Q}}^i) / Q_i$ is locally contractive.
\end{proposition}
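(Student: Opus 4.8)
The plan is to exhibit a single $\gamma\in\Gamma$ that has an attracting fixed point on $X:=\prod_{i\in S}\SL_2(\overline{\mathbb{Q}}^i)/Q_i$; by \cite{n-filling} Definition 2.3 this is precisely what local contractivity asks for, and the natural candidate is the diagonal embedding of a well-chosen diagonal matrix. I would start by recording the standard identification $\SL_2(\overline{\mathbb{Q}}^i)/Q_i\cong P^1(\overline{\mathbb{Q}}^i)$: the group $Q_i$ is the stabilizer of $[1:0]$, so $eQ_i\leftrightarrow[1:0]$, and in the affine chart $z=x/y$ on the complement of $[1:0]$ the diagonal matrix $a_t:=\mathrm{diag}(t,t^{-1})$ acts by the M\"obius map $z\mapsto t^2z$, whose fixed points are $[0:1]$ and $[1:0]$.

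Next I would choose $t:=\prod_{i\in S\setminus\{\infty\}}p_i$, the product of the rational primes lying below the finite places of $S$ (if $S=\{\infty\}$ the ring is $\mathbb{Z}$, the diagonal subgroup is trivial, and the assertion reduces to the classical fact that a lattice in $\SL_2(\mathbb{R})$ contains a hyperbolic element with an attracting fixed point on $P^1(\mathbb{R})$; so we may assume $S$ contains a finite place). Then $t$ is a unit of $\mathbb{Z}[1/\prod_{i\in S\setminus\{\infty\}}p_i]$, so by hypothesis the diagonal embedding $\gamma:=(a_t)_{i\in S}$, coming from $\mathbb{Z}[1/\prod p_i]\subset\mathbb{Q}\subset\overline{\mathbb{Q}}^i$, lies in $\Gamma$. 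The entire point of this choice is that $|t|_i<1$ at every finite place $i\in S$ --- the prime below $i$ divides $t$ while the remaining primes are $i$-adic units --- whereas $|t|_\infty=\prod p_i>1$.

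Then I would analyse the factors one at a time. If $i\in S$ is finite, then $|t^2|_i<1$, so the clopen ball $U_i:=\{z:|z|_i<1\}$, an open neighbourhood of $[0:1]$ in $P^1(\overline{\mathbb{Q}}^i)$, satisfies $\overline{a_tU_i}=a_tU_i=\{z:|z|_i<|t|_i^2\}\subsetneq U_i$ and $\bigcap_{n\ge1}a_t^nU_i=\{[0:1]\}$; hence $[0:1]$ is an attracting fixed point of $a_t$ on this factor. If $i=\infty$, passing to the chart $w=1/z$ near $[1:0]$ turns the action into $w\mapsto t^{-2}w$ with $|t^{-2}|_\infty<1$, so $U_\infty:=\{w:|w|_\infty<1\}$ is an attracting neighbourhood of $[1:0]$ on this factor.

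Finally I would assemble: put $\xi_i:=[0:1]$ for finite $i$, $\xi_\infty:=[1:0]$, $\xi:=(\xi_i)_{i\in S}$, and $U:=\prod_{i\in S}U_i$. Then $\gamma\xi=\xi$, $\overline{\gamma U}=\prod_i\overline{a_tU_i}\subsetneq\prod_iU_i=U$ (finitely many factors), and $\bigcap_{n\ge1}\gamma^nU=\prod_i\bigcap_{n\ge1}a_t^nU_i=\{\xi\}$, so $\xi$ is an attracting fixed point for $\gamma$ and the action of $\Gamma$ on $X$ is locally contractive. I do not expect a genuinely hard step; the only point requiring care is keeping track of the normalizations of the $i$-adic absolute values and of the fact that several places of $S$ may lie above one rational prime, neither of which affects the strict inequalities $|t|_i\ne1$ that the argument rests on.
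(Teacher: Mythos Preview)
Your proof is correct and follows essentially the same approach as the paper: identify each factor with the projective line, take the diagonal matrix $\mathrm{diag}(t,t^{-1})$ with $t=\prod_{i\in S\setminus\{\infty\}}p_i$, and observe that $[0:1]$ is attracting at each finite place while $[1:0]$ is attracting at the Archimedean place. Your write-up is in fact more explicit than the paper's, which simply records that $t^k\to 0$ at the finite places and $t^k\to\infty$ in $\mathbb{R}$ and names the resulting attracting point; one small caveat is that in the degenerate case $S=\{\infty\}$ you invoke the existence of a hyperbolic element in a \emph{lattice}, but the proposition as stated does not assume $\Gamma$ is a lattice---this edge case is not addressed in the paper either and should be read as implicitly excluded.
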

\begin{proof}
For every $i \in S$, identify $\SL_2 (\overline{\mathbb{Q}}^i) / Q_i$ with the projective line $P^1 (\overline{\mathbb{Q}}^i)$ and for $i \in S \backslash \infty$ let $p_i$ be the corresponding prime number. The sequence $a_k :=(\prod_{i \in S\backslash \infty} p_i)^k$ converges to zero for every non-Archimedean place in $S$ and diverges in $\mathbb{R}$.  Hence, if $\infty \in S$, the point $[1:0] \times \prod_{i \in S\backslash \infty} [0:1] \in \SL_2 (\mathbb{R})/Q_\infty \times \prod_{i \in S \backslash \infty} \SL_2 (\overline{\mathbb{Q}}^i)/Q_i$ admits a neighborhood that is contracted under the action of the diagonal matrix with entries $a_{1,1} = a_1$, $a_{2,2} = a_1^{-1}$. If $\infty \notin S$, the same conclusion holds for the point $\prod_{i \in S} [0:1]$. \end{proof}

\begin{example}
Let $p$ be a prime number. By Example \ref{exmp2} the space $\partial \mathbb{H} \times P^1 (\mathbb{Q}_p)$ is a boundary for $\SL_2 (\mathbb{Z}[1/p])$. It follows from Proposition \ref{prop4} that \\$C(\partial \mathbb{H} \times P^1 (\mathbb{Q}_p)) \rtimes \PSL_2 (\mathbb{Z}[1/p])$ is a Kirchberg algebra (in the UCT class).
\end{example}

\section{Acknowledgment}
The authors thank the anonymous referee for his/her precious comments on a previous version of the manuscript. They acknowledge the support of INdAM-GNAMPA and the MIUR Excellence Department Project awarded to the Department of Mathematics, University of Rome Tor Vergata, CUP E83C180001000 and of the grant Beyond Borders: "A geometric approach to harmonic analysis and spectral theory on trees and graphs", CUP: E89C20000690005. The first named author was supported by the ERC grant n. 669240 QUEST "Quantum Algebraic Structures and Models", CUP: E52I15000700002, by the MIUR - Excellence Departments - grant: "$C^*$-algebras associated to $p$-adic groups, bi-exactness and topological dynamics", CUP: E83C18000100006 and by the grant Beyond Borders: "Interaction of Operator Algebras with Quantum Physics and Noncommutative
Structure", CUP: E84I19002200005, during the period of this research. The second named author acknowledges the partial  support of the grant  "The convex space of sofic representations", CNCS Romania, PN-III-P1-1.1-TE-2019-0262. The present project is part of: - OAAMP - Algebre di operatori e applicazioni a strutture non commutative in matematica e fisica, CUP E81I18000070005. Florin R\u adulescu is a member of the Institute of Mathematics of the Romanian Academy.

\end{document}